\theoremstyle{plain}
\newtheorem{theorem}{Theorem}
\newtheorem{lemma}[theorem]{Lemma}
\newtheorem{corollary}[theorem]{Corollary}
\theoremstyle{definition}
\newtheorem{definition}[theorem]{Definition}
\newcommand{\mat}[1]{\mathsf{#1}}
\renewcommand{\vec}[1]{\mathbf{#1}}
\newcommand{\one}{\mathtt{1}}
\newcommand{\zero}{\mathtt{0}}
\newcommand{\zo}{\{\mathtt{0},\mathtt{1}\}}
\renewcommand{\Pr}{\mathbb{P}}
\newcommand{\ee}{\mathrm{e}}
\newcommand{\K}{\mathcal{K}}
\renewcommand{\L}{\mathcal{L}}
\newcommand{\mbyn}{m\times n}
\title{Almost Separable Matrices}
\author[1]{Matthew Aldridge\thanks{m.aldridge@bristol.ac.uk}}
\author[2]{Leonardo Baldassini\thanks{leonardo.baldassini@bristol.ac.uk}}
\author[1]{Karen Gunderson\thanks{k.gunderson@bristol.ac.uk}}
\affil[1]{Heilbronn Institute for Mathematics Research, School of Mathematics, University of Bristol, Bristol, UK}
\affil[2]{School of Mathematics, University of Bristol, Bristol, UK}
\date{\today}
\begin{document}

\maketitle

\begin{abstract}
An $\mbyn$ matrix $\mat A$ with column supports $\{S_i\}$
is $k$-separable if the disjunctions $\bigcup_{i \in \K} S_i$
are all distinct over all sets $\K$ of cardinality $k$.
While a simple counting bound
shows that $m > k \log_2 n/k$ rows are required for a separable matrix to exist, in fact
it is necessary for $m$ to be about a factor of $k$ more than this.
In this paper, we consider a weaker definition of `almost $k$-separability', which requires
that the disjunctions are `mostly distinct'. We show using a random construction that these matrices
exist with $m = O(k \log n)$ rows, which is optimal for $k = O(n^{1-\beta})$. Further, by calculating
explicit constants, we show how almost separable
matrices give new bounds on the rate of nonadaptive group testing.
\end{abstract}

\section{Introduction}

Let $\mat A \in \zo^{\mbyn}$ be an $\mbyn$ binary matrix,
and write $S_i$ for the support of its $i$th column (that is, the locations
of the $\one$s).
Then $\mat A$ is said to be \emph{$k$-separable} if the sets $\bigcup_{i \in \K} S_i$
are all distinct over all sets $\K\in\{1, 2, \ldots, n\}$ of cardinality $k$ (see
Definition \ref{defsep}, to come).

Separable matrices were first introduced by Erd\H{o}s and Moser in 1970 \cite{erdosmoser}
and have since been studied in different contexts, including coding theory, 
combinatorics and, as we discuss later, group testing, where they play a very important role.

Separable matrices are often studied through the slightly stronger concept of {\em disjunct
matrices} (see Definition \ref{disj}). 
Disjunct matrices were first introduced by Kautz and Singleton
\cite{ks} and, just like separable matrices, they have been extensively studied in coding theory,
combinatorics and group testing \cite{chen,hwang,dyachkov,furedi,ruszinko}. 

A central question in the study of both separable and disjunct matrices is the following: 
Given $n$ and $k$, how large must $m$ be
for there to exist either an $\mbyn$ $k$-separable or disjunct matrix? In this paper, we investigate
the asymptotics for separability as $n \to \infty$, where $k$ may grow with $n$.

A simple counting bound (Theorem \ref{counting}) shows that 
$m\geq \Omega(k \log n/k)$ rows are required.
Disappointingly, when $k = o(n)$ this bound is not tight, and we require roughly a factor of $k$
more than this, as in fact it has been shown \cite{hwang,chen} that $m \geq \Omega(k^2 \log n / \log k)$ is needed.
This lower bound is motivated by the connection between disjunctness and separability, as we 
discuss in Section \ref{sepsec}.
Notice that when $k$ grows linearly with $n$, taking the identity matrix is order optimal 
-- for this reason, we consider only $k = o(n)$ in this paper.

In order to meet the lower bound $m\geq\Omega(k\log n/k)$, 
we consider a relaxation of the requirement of $k$-separability
to \emph{almost $k$-separability}. Roughly speaking, a matrix
is \emph{almost $k$-separable} if the sets $\bigcup_{i \in \K} S_i$ are
`usually' distinct -- see Definition \ref{defalmsep} for a formal definition. 

Our main result shows that it is possible to 
achieve almost separability with only $O(k \log n)$ rows (Theorem \ref{mainthm}).
When $k = O(n^{1-\beta})$, for any $\beta \in (0,1]$, this is order-optimal to
the counting bound. However, we also aim to get best possible constants for $m$ -
a goal motivated by the study of the rate of group testing algorithms. 

Group testing is an old and well-studied 
search problem, first considered by Dorfman \cite{dorfman}, where the goal is to recover a sparse subset of $k$ {\em defective} elements 
spread among $n$ otherwise identical items. Instead of testing each item for defectiveness individually, classic 
group testing algorithms test items in batches. In the noiseless binary model we consider, 
tests can only reveal whether a given set contains at least one defective (a positive test) or no defectives (a negative test).
The connection between separable matrices and nonadaptive group testing is
well-known, and we discuss it in Section \ref{gtsec}. For the moment, we just observe 
that a sequence of tests designed a priori ({\em nonadaptive} group testing) has a natural binary-matrix representation:
each length-$n$ row represents a test, with entries being $\one$ if 
the corresponding item is being included in the test.

A matrix being
$k$-separable is equivalent to having zero probability of error for nonadaptive
group testing, while a matrix being almost
$k$-separable is equivalent to having a small probability of error. 
The `arbitrarily small probability of error' criterion we consider
here is the same as that in Shannon's theory of channel coding.

With this comparison in mind, we consider
the concept of \emph{rate} of group testing (Definition \ref{defrate}) for $k = n^{1 - \beta}$
defective items in a population of size $n$, which can be thought of as the
amount of information conveyed by each test. Using a separable matrix
with $m = \Omega(k^2 \log n / \log k)$ rows leads to a group testing rate of $0$.
However, using an almost separable matrix with $m =O(k \log n)$ rows
gives a strictly positive rate, with the rate depending on the contant implied by the big-$O$. Hence, 
here we are interested in getting good constants for $m$, not only in order-wise results.

In Theorem \ref{ratethm}, we show that our results meet previous results for the limiting regime where $k$ is fixed
as $n \to \infty$, and improves over the previous
best known bounds for larger values of the {\em sparsity parameter} $\beta\in[0,1]$ 
in the $k=n^{1-\beta}$ regime frequently considered in the group testing literature.

\section{Separable matrices}
\label{sepsec}

We begin by recalling the definition of a separable matrix.

\begin{definition} \label{defsep}
Given an $\mbyn$ binary matrix $\mat A = (a_{ij}) \in \zo^{\mbyn}$, we shall write
  $S_i := \{ j : a_{ij} = \one\}$
for the \emph{support} of column $i$ and for $\K \subseteq \{1,2,\dots, n\}$ also write
$S(\K) := \bigcup_{i \in \K} S_i$ for the support of a disjunction of columns.

  The matrix $\mat A$ is called \emph{$k$-separable} matrix
  if the for all sets $\K$ of size $k$, there is no other set $\L$ also of size $k$ with
  $S(\L)$ = $S(\K)$.
%
\end{definition}

The case $k = 0$ is trivial, so we assume $k \geq 1$ throughout. We shall also assume $k \leq n/2$, which will be no restriction in the limiting regimes we study.


The following counting bound is described by Chen and Hwang as ``simple-minded'' \cite{chen}.

\begin{theorem} \label{counting}
  Let $M(n,k)$ be the smallest $m$ such that an $\mbyn$ $k$-separable
  matrix exists. Then
  \[ M(n,k) \geq \log_2 \binom nk . \]
\end{theorem}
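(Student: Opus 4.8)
The plan is to count distinct disjunctions via a simple pigeonhole/injectivity argument. The key observation is that $k$-separability says precisely that the map $\K \mapsto S(\K)$, taken over all $k$-subsets $\K \subseteq \{1,\dots,n\}$, is injective: distinct $k$-sets give distinct supports $S(\K)$. Therefore the number of distinct supports $S(\K)$ that arise must be at least the number of $k$-subsets, which is $\binom{n}{k}$.

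The next step is to bound the number of \emph{possible} values of $S(\K)$ from above. Each $S(\K)$ is a subset of the row index set $\{1,\dots,m\}$, and there are exactly $2^m$ such subsets. Hence if the matrix is $k$-separable we must have $\binom{n}{k} \leq 2^m$, since the injective map cannot have a domain larger than its codomain.

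Taking $\log_2$ of both sides immediately yields $m \geq \log_2 \binom{n}{k}$, which is the claimed bound; since this holds for every $k$-separable $\mbyn$ matrix, it holds in particular for the minimal $m$, giving $M(n,k) \geq \log_2 \binom{n}{k}$.

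I anticipate essentially no obstacle here: the argument is a one-line counting bound, exactly as the attribution ``simple-minded'' suggests. The only point requiring any care is noting that injectivity is the correct reformulation of Definition \ref{defsep} — the definition states that no two distinct $k$-sets share a support, which is exactly the statement that $\K \mapsto S(\K)$ is injective on $k$-subsets. One might optionally remark that the codomain could be taken as the set of achievable disjunctions rather than all $2^m$ subsets, but the crude bound $2^m$ already suffices.
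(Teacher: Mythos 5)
Your proof is correct and is essentially identical to the paper's: both argue that $k$-separability makes $\K \mapsto S(\K)$ injective on $k$-subsets, so $\binom{n}{k} \leq 2^m$ since each $S(\K)$ lies in the power set of $\{1,\dots,m\}$, and taking $\log_2$ gives the bound.
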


\begin{proof}
  Clearly 
    \[ \left| \left\{ S(\K) :|\K| = k \right \} \right| \leq |\mathcal P (\{1,2,\dots,m\})| = 2^m ,\]
  where $\mathcal{P}$ denotes the power set.
  Hence for $\mat A$ to be $k$-separable we require $2^m \geq \binom nk$,
  and taking logarithms gives the result. \qed
\end{proof}

Using the lower bound of
  \begin{equation} \label{binom}
    \left(\frac{n}{k}\right)^k \leq \binom nk \leq \left(\frac{\ee n}{k}\right)^k  
  \end{equation}
(which we shall use many times in this paper),
we see that a $k$-separable matrix must have at least $m \geq k \log_2 n/k = \Omega(k \log n/k)$ rows.

As we anticipated, separable matrices are tightly related to another class of matrices, namely 
that of disjunct matrices. 

\begin{definition} \label{disj}
With the notation of Definition \ref{defsep}, $\mat A$ is \emph{$k$-disjunct} if for all
sets $\K$ of cardinality $|\K| = k$, there does not exist $i \not\in \K$ such that
$S_i \subseteq S(\K)$.
\end{definition}

In the language of set systems, a matrix $\mat A$ being $k$-seperable  is equivalent to the family $\{S_i\}_{i = 1}^n$ being $k$-union-free, and $\mat A$ being $k$-disjunct is equivalent to $\{S_i\}_{i = 1}^n$ being $k$-cover-free.

It's easy to see that $k$-disjunctness implies $k$-separability (see, for example, \cite{ks}, \cite[Section 7.2]{hwang}, or the special case $\epsilon = 0$ of Lemma \ref{implies} below). On the other hand, Chen and Hwang \cite[Theorem 2]{chen} 
have shown that it is possible to construct 
a $k$-disjunct matrix from a $2k$-separable matrix by adding at most one row to it, which means that 
disjunct and separable matrices share the same order-wise asymptotics. Dyachkov and Rykov have quantified these asymptotics by showing that $m \geq \Omega(k^2 \log n / \log k)$ rows are necessary for a matrix to be $k$-disjunct \cite{dyachkov} -- similar results appear elsewhere \cite{ruszinko} \cite{furedi} \cite[Theorem 7.2.14]{hwang}. This means that it is not possible to create a $k$-separable matrix with 
$m = O(k \log n)$ rows.

As disjunctness is a stronger (and, in some ways, simpler) property than separability, 
efforts to derive upper bounds on $m$ for separable matrices have often proceeded via the construction of disjunct matrices.
In their seminal paper \cite{ks}, Kautz and Singleton give a probabilistic existence theorem for
$k$-disjunct matrices with $m = O(k^2 \log n)$ rows. In the group testing literature there 
exist explicit constructions of testing schemes with $O(k^2 \log n)$ rows, 
see for example Porat and Rothschild \cite{porat}.


\section{Almost separable matrices}

Since separable matrices cannot meet the counting bound, it would be of interest if a matrix could be 
close to being separable using only $O(k \log n)$ rows.  Such a matrix would be order-optimal.

With this in mind, we define the concept
of an \emph{almost separable} matrix in a similar manner to Defintion \ref{defsep}.

\begin{definition} \label{defalmsep}
  With the notation of Definition \ref{defsep}, $\mat A \in \zo^{\mbyn}$ is
  \emph{$\epsilon$-almost $k$-separable}
  if for at most $\epsilon \binom nk$ sets $\K$ of size $k$ does there exist another
  set $\L$ of size $k$ with $S(\L) = S(\K)$.
%
\end{definition}

An analogous definition is present 
in
for example \cite{zhigljavsky}, where almost separable matrices are called {\em weakly separating 
designs}. Note that setting $\epsilon = 0$ gives the definition of a separable matrix.


The main result of this paper is to show the existence of $\epsilon$-almost
$k$-separable matrices with $m = O(k \log n)$ rows (see Theorem \ref{mainthm}
below). We also examine the implicit constants for the case when $k = n^{1 - \beta}$
grows polynomially in $n$.

Malyutov \cite{malyutov} effectively showed that $\epsilon$-almost $k$-separable matrices exist with $m = (k + o(1)) \log_2 n$ rows in the regime where $k$ is fixed as $n \to \infty$.
This is a special case of a more general result Malyutov proved using an information theoretic argument -- this 
and similar work is reviewed in \cite{malyutovsurvey}. 
Seb\H o showed effectively the same result \cite{sebo}, again for fixed $k$, by analysing a concrete bound on the probability that
there are two different sets of size $k$ whose disjunctions coincide -- we follow a similar route here later.
The same result for $k$ fixed and $n\rightarrow\infty$ was rediscovered by Zhigljavsky 
\cite[Theorem 5.5]{zhigljavsky}
. Although technically different from Seb\H o's argument, 
Zhigljavsky's proof is morally similar: given two sets $\K$ and $\L$ of $k$ columns each, 
Zhigljavsky counts how many rows it is possible to construct that would produce the 
same value for both $S(\K)$ and $S(\L)$. He calls this number a R\' enyi coefficient and 
only considers designs with fixed- or bounded-size tests.

Our result improves on these by allowing $k$ to vary arbitrarily with $n$, subject to $k = o(n)$.
In our discussion of group testing in Section \ref{gtsec} we show how, in some regimes, this work 
also improves on recent results on nonadaptive group testing giving bounds of the form $m = O(k \log n)$.

The definition of a disjunct matrix (Defintion \ref{disj}) can similarly be weakened to give an \emph{almost disjunct matrix}. (This definition also appears in \cite{mazumdar} and, previously, in \cite{macula}.)

\begin{definition}
With the notation of Definition \ref{defsep}, $\mat A$ is \emph{$\epsilon$-almost $k$-disjunct} if for at most $\epsilon \binom nk$ sets $\K$ of size $k$ does there exist a column $i \not\in \K$
  with $S_i \subseteq S(\K)$.
\end{definition}

Note again that $\epsilon = 0$ corresponds to a disjunct matrix. Unsurprisingly, almost disjunctness implies almost separability.

\begin{lemma} \label{implies}
Let $\mat A$ be an $\epsilon$-almost $k$-disjunct matrix. Then $\mat A$ is $\epsilon$-almost $k$-separable (with the same $\epsilon$ and $k$).
\end{lemma}

\begin{proof}
We prove the contrapositive. Suppose $\mat A$ is not $\epsilon$-almost $k$-separable. Then there are more than $\epsilon \binom nk$ sets of size $k$ breaking separability.
Let $\K$ be one of these sets, so there is another set $\L$ of size $k$ with $S(\K) = S(\L)$. 
Letting $i \in \K \setminus \L$, we have $S_i \subseteq S(\K)$, breaking disjunctness. 
Hence there are more than $\epsilon \binom nk$ sets breaking disjunctness, and
$\mat A$ is not $\epsilon$-almost $k$-disjunct. \qed
\end{proof}

Mazumdar \cite{mazumdar} shows that there exist almost $k$-disjunct matrices with $m=O(k^{3/2} \sqrt{\log n})$ rows
in the regime $k\sim n^\delta$, $\delta >0$, which is the same as that we consider for group testing. 
Mazumdar's construction is similar to those of Kautz and Singleton \cite{ks} and Porat and 
Rothschild \cite{porat}. In particular, \cite{ks} shows how to build fully disjunct matrices with $O(k^2 \log^2_{k\log n}n)$
rows by mapping the symbols of a $q$-ary Reed-Solomon code to unit-weight binary vectors of length $q$, while \cite{porat} 
improves on this scheme by replacing the RS code with a linear $q$-ary code achieving the Gilbert-Varshamov bound. This 
produces fully disjunct matrices with $O(k^2\log n)$ rows. 
This improves on the $\Omega(k^2 \log n / \log k)$ required for full disjunctness or separability, 
while being less good than the $O(k \log n)$ we achieve for almost separability here.

Our main result is then the following.

\begin{theorem} \label{mainthm}
For any sequence $k = k(n) = o(n)$ and $\epsilon > 0$, there exist an $\epsilon$-almost $k$-separable matrix
with $m = O(k \log n)$ rows.

More precisely, for $\alpha \in [\ln 2, 1]$, define
 \begin{align}
    M_1(n,k,\alpha) &= \frac{1}{-\ln(1-2\ee^{-\alpha}+2\ee^{-2\alpha})} k \ln \frac nk, \notag\\
    M_2(n,k,\alpha) &= \frac{1}{-\ln(1- 2\ee^{-\alpha}+ 2\ee^{-\alpha(1+1/k)})} \ln nk,\label{M123}\\
     M(n,k) &= \min_{\alpha \in [\ln 2, 1]} \max\left\{M_1(n,k,\alpha),M_2(n,k,\alpha)\right\}. \notag 
  \end{align}
Then for any $\epsilon, \delta > 0$, for $n$ sufficiently large, and $m > (1+\delta)M(n, k)$, there exists and $m \times n$ $\epsilon$-almost $k$-separable matrix.
\end{theorem}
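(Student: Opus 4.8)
The plan is to use a random construction where each entry of $\mat A$ is an independent $\mathrm{Bernoulli}(p)$ random variable, for a carefully chosen $p$, and to show that the expected number of "bad" sets $\K$ (those for which some other $\L$ of size $k$ gives $S(\L)=S(\K)$) is at most $\epsilon\binom nk$. If the expectation is this small, then by the first-moment method at least one realization of $\mat A$ achieves the almost-separability bound, so such a matrix exists. The parameter $p$ will be taken to be $p = \alpha/m$ (equivalently $\alpha = pm$), which is why the final bound is optimized over $\alpha$; the restriction $\alpha\in[\ln 2, 1]$ should emerge from the range where the relevant probability estimates are valid and the bound is nontrivial.

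The key step is to bound, for a \emph{fixed} pair of distinct sets $\K,\L$ of size $k$, the probability that $S(\K)=S(\L)$. Writing $a=|\K\setminus\L|$, $b=|\L\setminus\K|$, and noting the shared columns $\K\cap\L$ contribute identically to both disjunctions, the event $S(\K)=S(\L)$ depends only on the rows: in each row, the union over $\K$ and the union over $\L$ must agree. Conditioning row-by-row (rows are independent), one computes the per-row probability that the two disjunctions match. A row is "good" unless it distinguishes $\K$ from $\L$ — that happens, for instance, when $\K\setminus\L$ hits a row that $\L\setminus\K$ and $\K\cap\L$ miss, or vice versa. A short computation gives the single-row matching probability in terms of $p$, $a$, $b$, and $|\K\cap\L|$; raising to the $m$th power and then summing over all pairs $\K,\L$ with a given overlap pattern is the heart of the argument. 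The two expressions $M_1$ and $M_2$ should correspond to the two extreme regimes of the overlap: $M_1$ governs pairs that are nearly disjoint (overlap small, so $a,b$ close to $k$), while $M_2$ governs pairs differing in only a single element ($a=b=1$), which is typically the dominant, hardest-to-separate case and forces the $\ln nk$ scaling.

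\textbf{The main obstacle} will be controlling the sum over all overlap patterns uniformly and showing that the worst case is captured by the maximum of the two bounds $M_1, M_2$. Concretely, for each value of the overlap $s=|\K\cap\L|$ (equivalently each $a=k-s$), one must bound the number of such pairs, roughly $\binom ns\binom{n-s}{a}\binom{n-s-a}{a}$, times the per-pair matching probability raised to the $m$th power, and show the total is $o\!\left(\epsilon\binom nk\right)$ once $m>(1+\delta)M(n,k)$. Balancing the combinatorial factor (which grows with $a$) against the matching probability (which decays with $a$) is where the optimization over $\alpha$ and the split into $M_1$ (large $a$) and $M_2$ (small $a$) arises; I expect the argument to take logarithms throughout, use the bounds \eqref{binom} on $\binom nk$, and reduce to checking that the per-$a$ exponent is negative, with the boundary terms $a=1$ and $a=k$ dominating. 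The careful tracking of constants — needed for the group-testing rate application — is what makes the precise forms of $M_1$ and $M_2$, with their $-\ln(1-2\ee^{-\alpha}+\cdots)$ denominators, rather than merely the order bound $O(k\log n)$, the delicate part of the proof.
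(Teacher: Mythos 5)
Your overall strategy coincides with the paper's: an IID Bernoulli($p$) random matrix, a per-row computation of the probability that two $k$-sets' disjunctions agree, a union bound over pairs organised by intersection size, and the first-moment method (Markov's inequality) to pass from the per-set overlap probability to $\epsilon$-almost separability. You also correctly identify that $M_1$ governs nearly disjoint pairs and $M_2$ governs pairs sharing $k-1$ elements. However, your stated choice of density, $p = \alpha/m$, is a genuine error that breaks the argument. The paper takes $p = 1 - \ee^{-\alpha/k}$, so that $q^k = \ee^{-\alpha}$ (roughly $p \sim \alpha/k$), and this choice is forced by the form of the constants: $1 - 2\ee^{-\alpha} + 2\ee^{-2\alpha}$ is precisely the per-row failure probability $1 - 2q^k + 2q^{2k}$ for disjoint pairs when $q^k = \ee^{-\alpha}$. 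With $p = \alpha/m$ and $m = \Theta(k \log n)$, each column is identically zero with probability $(1-\alpha/m)^m \to \ee^{-\alpha}$, a constant, so about $\ee^{-\alpha} n$ columns are all-zero; any two all-zero columns are interchangeable, so a constant fraction of all $k$-sets (for $k \to \infty$, almost all of them) are confusable, and the matrix fails to be $\epsilon$-almost separable for small $\epsilon$ no matter how large $m$ is. Equivalently, for $\K, \L$ sharing $k-1$ elements, the probability that no row distinguishes them is $\left(1 - 2q^k p\right)^m \to \ee^{-2\alpha}$, a constant, while there are $k(n-k)$ such pairs, so your first-moment sum diverges.

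A secondary gap: the step you defer, namely ``checking that the per-$a$ exponent is negative, with the boundary terms dominating,'' is exactly where the paper has to work hardest to obtain the stated constants. A term-by-term domination argument does succeed for $\alpha = \ln 2$ (the paper's simpler case, giving $m > (1+\delta) k \log_2 nk$), but for general $\alpha \in [\ln 2, 1]$ the natural crude estimates (such as $1 - \ee^{-x} \geq (1-\ee^{-1})x$) lose constant factors and deliver only $m = O(k \log n)$, not $M(n,k)$. The paper's appendix instead expands the per-pair failure probability $\left(1 - 2q^k + 2q^{2k-b}\right)^m$ by the binomial theorem, exchanges the order of summation, splits the expansion index into three ranges, and controls the middle range with an Azuma--Hoeffding concentration bound for a binomial random variable. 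So even after correcting $p$ to $1 - \ee^{-\alpha/k}$, your sketch establishes the first (order-of-magnitude) sentence of the theorem, but not the ``more precisely'' clause with the constants $M_1$ and $M_2$.
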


Consider the special case $\alpha = \ln 2$. It is possible to see that $M_2$ dominates, and hence that 
there exist almost separable matrices with $m = (1+\delta)
k \log_2 nk$ rows. 
Note that this is sufficient to show the $m = O(k \log n)$ result -- and comes with a slightly easier proof
than the general case (see below). This bound also meets the Malyutov--Seb\H o result of $m \sim k \log_2 n$ for $k$ constant.
However, it is possible to get slightly better constants for most $k = k(n)$ by allowing 
different values of $\alpha$.
In particular, $M_2$ with $\alpha = 1$ gives the best result in many regimes.

In Section \ref{gtsec} we discuss the constants in more detail in the regime $k = n^{1-\beta}$
for $\beta \in (0,1)$. (The reader may wish to skip ahead to Figure \ref{rategraph}, to get a feeling for this result.)

Our proof gives a randomised construction where the matrix is chosen to have entries sampled
from IID Bernoulli random variables; we discuss this in the next section.

\section{Proof of main result}\label{secproof}

We proceed to prove Theorem \ref{mainthm} as follows. Fix $n$ and $k$.
We will choose $\mat A$ to be an $\mbyn$ matrix (where $m$ will be determined
later) with each entry independently $\one$ with probability $p$ and $\zero$ with
probability $q = 1-p$, for some $p$ also to be chosen later. We aim to show that there is a choice of $m$ and $p$ so that, with positive probability, $\mat A$ is $\epsilon$-almost $k$-separable, and hence that such a matrix exists.

The following bound will be important, and is fairly well known -- see for example
Seb\H o \cite{sebo}, who analyses its asymptotics for fixed $k$ as $n \to \infty$.

\begin{lemma}
  Let $\mat A$ be a randomly chosen matrix in $\{0,1\}^{m \times n}$ with each entry independently $1$ with probability $p$.   For any set $\K$ of size $k \leq n/2$,  
  then
    \begin{equation} \label{overlap}
    \Pr(\exists\ \L \text{ with } |\L| = k,\ S(\L) = S(\K)) \leq \sum_{b=0}^{k-1} \binom{k}{b}\binom{n-k}{k-b}
        \left(1 - 2q^k + 2q^{2k-b}\right)^m .
  \end{equation}
\end{lemma}

\begin{proof}
Say that an \emph{overlap} occurs if there exists $\L$ with $|\L| = k$ and $S(\L) = S(\K)$.
  Take two distinct sets $\K, \L$, both of size $k$, that
  have $b = |\K \cap \L|$ elements in common.
  Then a row $j$ of $\mat A$ could distinguish between $\K$ and $\L$ in two ways:
  either we have $j \in S(\K)$ while $j \notin S(\L)$, or the other way round:
  $j \in S(\L)$ while $j \notin S(\K)$.
  
  If the entries of the row $\vec a_j$ are IID Bernoulli$(p)$, these two events each occur
  with probability $q^k(1 - q^{k-b}) = q^k - q^{2k - b}$. Hence, row $j$ fails
  to distinguish between $\K$ and $\L$ with probability $1 - 2q^k(1 - q^{k-b}) = 1 - 2q^k + 2q^{2k-b}$.
  
  Since the rows of $\mat A$ are IID, the whole matrix fails to distinguish between $\K$
  and $\L$ with probability $(1 - 2q^k + 2q^{k-b})^m$.
  
  The result then follows by a union bound over $\L$, noting that the number of
  sets of size $k$ sharing $b$ elements with $\K$ is precisely $\binom kb \binom{n-k}{k-b}$. \qed
\end{proof}

The main work in this paper is a careful asymptotic analysis of the overlap
probability \eqref{overlap}, showing for which $m$ it can be made arbitrarily small.

\begin{lemma} \label{hardlem}
  For every sequence $k = k(n) = o(n)$, $\varepsilon, \delta > 0$, there exists $n_0$ so that if $n > n_0$ and $m > (1+\delta)M(n,k)$, with $M(n,k)$
  as in Theorem \ref{mainthm},
 then $\Pr(\mathrm{overlap}) < \varepsilon$. 
\end{lemma}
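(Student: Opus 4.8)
The plan is to show that the right-hand side of \eqref{overlap} can be made smaller than $\varepsilon$ once $n$ is large. First I would fix $\alpha \in [\ln 2, 1]$ and choose the sampling probability through $q = \ee^{-\alpha/k}$, so that $q^k = \ee^{-\alpha}$. Substituting $c = k - b$ (the common size of the two halves of the symmetric difference $\K \triangle \L$) rewrites \eqref{overlap} as
\[ \Pr(\mathrm{overlap}) \le \sum_{c=1}^{k} \binom kc \binom{n-k}{c}\, T(c/k)^m, \qquad T(u) := 1 - 2\ee^{-\alpha}\bigl(1 - \ee^{-\alpha u}\bigr), \]
since $1 - 2q^k + 2q^{2k-b} = T(c/k)$ and $\binom kb\binom{n-k}{k-b} = \binom kc\binom{n-k}c$. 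The two extreme terms are already telling: using \eqref{binom}, the $c = k$ term is at most $(\ee n/k)^k T(1)^m$, which vanishes precisely when $m > (1+o(1))M_1$, while the $c = 1$ term is at most $kn\,T(1/k)^m$, which vanishes when $m > (1+o(1))M_2$. Thus $M_1$ governs disjoint pairs and $M_2$ governs adjacent pairs, and the whole task is to show that no intermediate $c$ demands more rows than these two endpoints.

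The key tool I would establish is a concavity statement. Writing $L(u) := -\ln T(u)$, a direct computation of $L''$ shows $L''(u) \le 0$ on $[0,1]$ exactly when $2\ee^{-\alpha} \le 1$, i.e.\ when $\alpha \ge \ln 2$; this is the sole reason for the left endpoint of the interval $[\ln 2, 1]$. Hence $L$ is concave with $L(0) = 0$, and I would extract two consequences. The chord bound $L(u) \ge u\,L(1)$ for $u \in [0,1]$ controls the denominators for large $c$, and the near-linear behaviour $L(u) = L'(0)\,u\,(1+O(u))$ as $u \to 0$, with $L'(0) = 2\alpha\ee^{-\alpha}$, controls them for small $c$. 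Both estimates are uniform over the compact range $\alpha \in [\ln 2, 1]$, which is what legitimises choosing $\alpha$ optimally for each $(n,k)$ at the end.

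With these in hand I would split the sum at $c = \eta k$ for a small constant $\eta = \eta(\delta)$. For $c = uk$ with $u \ge \eta$, bounding $\binom kc \le 2^k$ and $\binom{n-k}c \le (\ee n/c)^c$ gives a log-numerator at most $uk\ln(n/k) + O(k)$, while the chord bound gives $m\,L(u) \ge (1+\delta)M_1\,u\,L(1) = (1+\delta)\,uk\ln(n/k)$; the exponent is therefore at most $-\delta\,uk\ln(n/k) + O(k) \to -\infty$, so these terms sum to $o(1)$. For $1 \le c \le \eta k$, the near-linearity of $L$ gives $L(c/k) \ge (1-O(\eta))\,c\,L(1/k)$, and since $M_2\,L(1/k) = \ln nk$ the exponent is at most $c\bigl(\ln nk + O(1) - (1+\delta)(1-O(\eta))\ln nk\bigr)$; choosing $\eta$ small enough that $(1+\delta)(1-O(\eta)) \ge 1 + \delta/2$ makes this at most $-c\bigl(\tfrac{\delta}{3}\ln n - O(1)\bigr)$ for large $n$, so these terms are dominated by a geometric series whose ratio tends to $0$. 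Summing the two regimes shows $\Pr(\mathrm{overlap}) \to 0$, hence $< \varepsilon$ once $n > n_0$.

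The main obstacle is exactly this uniform control of the interior terms: individually, some intermediate $c$ could in principle require more than $\max\{M_1,M_2\}$ rows, and it is the concavity of $L$—available only because $\alpha \ge \ln 2$—that rules this out, pinning the per-$c$ requirement between its values at the two endpoints. The remaining effort is bookkeeping: matching the regime boundary $\eta$ and the various $o(1)$ errors to the slack $\delta$, and verifying that all constants are uniform in $\alpha$ so that the final minimisation over $\alpha \in [\ln 2, 1]$ producing $M(n,k)$ is valid.
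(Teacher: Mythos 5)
Your proof is correct, but it takes a genuinely different route from the paper's. You work term-by-term in the sum over the non-overlap size $c$, splitting at $c=\eta k$ and interpolating between the two extreme conditions via concavity of $L(u)=-\ln T(u)$: the chord bound $L(u)\geq uL(1)$ handles $c\geq\eta k$ and yields the $M_1$ condition, while the Taylor expansion $L(u)=2\alpha\ee^{-\alpha}u(1+O(u))$ handles $c\leq\eta k$ and yields the $M_2$ condition (indeed $L(1/k)$ is exactly the logarithm appearing in $M_2$ of \eqref{M123}, so you recover the paper's constants precisely, not just the order). The paper instead expands $(s+tu^{k-c})^m$ by the binomial theorem, exchanges the order of summation, bounds the inner sum by maximising $(\ee^2 knq^j/c^2)^c$ over $c$, and splits into three ranges of the binomial index $j$, invoking the Azuma--Hoeffding inequality for the middle range. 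Your argument is more elementary --- no concentration inequality and no exchange of summations --- and it supplies a conceptual explanation for the interval $[\ln 2,1]$: writing $A=2\ee^{-\alpha}$, one gets $L''(u)=A\alpha^2\ee^{-\alpha u}(A-1)/T(u)^2$, so $L$ is concave exactly when $\alpha\geq\ln 2$, whereas in the paper's proof the estimates hold for any $\alpha>0$ and the restriction to $[\ln 2,1]$ enters only at the final optimisation step (where $M_1$ is minimised at $\alpha=\ln 2$ and $M_2$ at $\alpha=1$). Two pieces of bookkeeping you gesture at do check out: uniformity in $\alpha$ holds because $T\geq 1/2$, $|L''|$ is bounded, and $L'(0)=2\alpha\ee^{-\alpha}\geq\ln 2$ on the compact interval; and in the small-$c$ regime the comparison of $L(c/k)$ with $cL(1/k)$ incurs an error $O(1/k)$ in addition to $O(\eta)$, but whenever that regime is nonempty one has $1/k\leq\eta$, so it is absorbed into the $O(\eta)$ term and then into the slack $\delta$.
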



\begin{proof}
We first prove that it suffices to have $m > (1+\delta) M_2(n,k,\ln 2)$, with $M_2(n,k,\ln 2) = (1+o(1))k \log_2 nk$. This is simpler to prove than the full result and illustrates the main techniques.

Here, we take $p = 1 - 2^{-1/k}$,
as does Seb\H o \cite{sebo}, so that $q = 2^{-1/k}$. This is a special case of the 
general value of $p$ used in the appendix, $p=1-\ee^{-\alpha/k}$, by taking $\alpha = \ln 2$.
Note that, in group testing parlance, this
is the value of $p$ that gives a $50:50$ chance of a test being positive.
The bound \eqref{overlap} then becomes
  \[ \mathbb P(\text{overlap})
       \leq \sum_{b=0}^{k-1} \binom kb \binom{n-k}{k-b} \left( \frac12 2^{b/k} \right)^m . \]
     
It will be convenient to write $c = k-b$ for the number of nonoverlapping items, to get
  \begin{align*}
    \mathbb P(\text{overlap})
      &\leq \sum_{c=1}^k \binom{k}{k-c} \binom{n-k}{c} \left( \frac12 2^{(k-c)/k} \right)^m \\
      &= \sum_{c=1}^k \binom{k}{c} \binom{n-k}{c} 2^{-cm/k} .
  \end{align*}
  
 When $m > (1+\delta)k \log_2 nk$, then the terms in the above sum are decreasing since
 \begin{align*}
 \frac{\binom{k}{c+1}\binom{n-k}{c+1}2^{-(c+1)m/k}}{\binom{k}{c}\binom{n-k}{c}2^{-cm/k}}
 	&=\frac{(k-c)(n-k-c)2^{-m/k}}{(c+1)^2}\\
    &\leq \frac{c^2 - nc + k(n-k)}{nk(c^2+2c+1)} &&\text{(since $2^{-m/k} \leq 1/nk$)}\\
    &\leq \frac{1}{4},
 \end{align*}
 for $n > 2k$ and $k \geq 2$.
 Thus, the probability of an overlap can be estimated by the largest term with
 \begin{align*}
 \mathbb{P}(\text{overlap})
 	&\leq k(n-k)2^{-m/k} \sum_{c=1}^k  \left(\frac{1}{4}\right)^{c-1}\\
    & \leq k n 2^{-(1+\delta)\log_2 nk} \frac{4}{3}\\
    & =  nk (nk)^{-1-\delta} \frac{4}{3}\\
    & \leq 2 (nk)^{-\delta} ,
	\end{align*}
which, for fixed $\delta > 0$, can be made arbitrarily small for $n$ sufficiently large.    

  


Further, since $\log_2 nk \leq 2 \log_2 n$, we see that 
$m > (1+\delta)k \log_2 nk = O(k \log n)$.


We can get the more general result that it suffices to have $m > (1+\delta)M(n,k)$,
with $M(n,k)$ as in \eqref{M123}, by instead taking $p = 1-\ee^{-\alpha/k}$, and then optimising over $\alpha$. The analysis is very
similar to that above, but somewhat more longwinded. The interested reader is directed to
the appendix for the details. \qed 
\end{proof}

Proving our main result is now straightforward.

\begin{proof}[Proof of Theorem \ref{mainthm}]
Choose the matrix $\mat A$ at random as above, with $m$ and $n$ chosen as in Lemma \ref{hardlem} so that the
overlap probability is at most $\epsilon/2$.

Write $X$ for the number of sets $\K$ of size $k$ that experience an overlap.
It is clear $\mat A$ will be $\epsilon$-almost $k$-separable provided
that $X \leq \epsilon \binom nk$.

Then we have
  \[ \Pr\left( X> \epsilon \binom nk \right)
       \leq \frac{1}{\epsilon \binom nk} \mathbb E X , \]
by the Markov inequality. But this expectation is, by Lemma \ref{hardlem}
  \[ \mathbb E X
      = \sum_{|\K| = k} \Pr(\K \text{ has an overlap}) 
      = \binom nk \Pr(\text{overlap}) 
      \leq \binom nk \frac{\epsilon}{2} .
 \]
Hence, our random $\mat A$ is $\epsilon$-almost $k$-separable with probability at least
$1/2$, so such matrices must exist. \qed
\end{proof}

\section{Rates for nonadaptive group testing} \label{gtsec}

In this section, we show how the use of almost separable matrices can
give new results on the rate of nonadaptive group testing.

As we outlined in the introduction, in a nonadaptive group testing procedure we aim to 
find a subset $\K$ of $k$ defective items within a population of $n$ identical items.
We use $m$ pooled tests. Recall that the outcome of a test $j$ is positive if one or more of the
defective items is in the test pool, and negative if none of them are. 
We summarise our testing procedure by a matrix $\mat A = (a_{ij})$, where
$a_{ij} = \one$ denotes that item $i$ is in the pool for test $j$, and $a_{ij} = \zero$ denotes
that it is not. Recalling the notation of Definition 1, the set of positive tests for a defective set $\K$ is precisely $S(\K)$.

The aim is, given the outcomes $S(\K)$ and the matrix $\mat A$, to identify the defective set $\K$. 
Clearly if there is no other $\L$ with $S(\K) = S(\L)$, then we can find $\K$ (at least theoretically:
for study of practical algorithms for this, see, for example, \cite{ABJ,chan,sejd,malyutov2,wadayama}).
Conversely, if there is an $\L$ with $S(\K) = S(\L)$, then our error probability is at least $1/2$.

A comprehensive survey 
of combinatorial group testing is given in \cite{hwang}. Likewise, the study of nondeterministic 
testing schemes is addressed in the field of probabilistic group testing -- see for example 
\cite{zhigljavsky} and references therein. The derivation of both non-constructive results and 
practical algorithms has been addressed in different contexts, including combinatorial \cite{hwang,malyutov,mazumdar,sebo}, 
probabilistic \cite{ABJ,zhigljavsky} and information-theoretic \cite{atia,BJA,malyutovsurvey,porat,sejd} scenarios.

The connection between separable matrices and nonadaptive group testing
is well explored. In particular, if there are known to be exactly $k$
defective items, then a testing matrix will allow us to find the defective set
with certainty if and only if it is $k$-separable. 
The advantages of using what we call almost separability for group testing in the fixed-$k$ regime have also been discussed in \cite{zhigljavsky}. 

While separable matrices allow detection with zero probability of error, 
the study of group testing within the scope of information theory and the need for efficient 
algorithms generated an interest
in nonadaptive group testing with low -- but not necessarily zero -- probability of error,
a situation which has gained considerable attention \cite{ABJ,chan,sejd,malyutov,malyutov2,atia,wadayama}. Here
the probability of error is defined as an average over all possible defective sets of size $k$; that is,
\[ \Pr(\text{error}) = \frac{1}{\binom nk}\sum_{|\K| = k} \Pr(\text{error} \mid \K)\ .\]

Baldassini, Johnson and Aldridge \cite{BJA} introduced a concept of
the \emph{rate} of group testing to quantify how well a group testing
design works. (An earlier definition of rate for the fixed $k$ regime had been introduced 
by Malyutov \cite{malyutovsurvey}.) The rate is the ratio of the number of tests to the counting bound
$\log_2 \binom nk$. If we interpret the counting bound as a binary labelling of 
all possible defective sets of size $k$, the rate can be considered as the number
of bits learned per test by the group testing procedure.

\begin{definition} \label{defrate}
Consider a group testing problem with $n$ items of which $k$ are defective.
A design with $m$ tests is said to have \emph{rate}
$R=m/\log_2 \binom nk$ .
  
Given a sequence of group testing problems for $n$ items
of which $k = k(n)$ are defective, a rate $R$ is said to be \emph{achievable} for a design $\mat A$ if,
for any $\epsilon > 0$, the design finds the defective set with
error probability at most $\epsilon$ with rate at least $R$ for $n$
sufficiently large.
\end{definition}

We follow Baldassini et al. \cite{BJA,ABJ} and study achievable rates in regimes where
$k = k(n) = n^{1-\beta}$ for different values of the sparsity
parameter $\beta \in (0,1]$.

Note from the above that using a $k$-separable matrix with
$m \geq \Omega (k^2 \log n /$\\$ \log k)$ tests gives rate $0$ for all
values of $\beta < 1$.

As far as we are aware, the best known rate for nonadaptive group testing
until now is achieved by the \texttt{DD} algorithm of Aldridge, Baldassini and Johnson \cite{ABJ},
which has a lower bound on the maximum achievable rate of
  \begin{equation} \label{DD}
    R_{\mathtt{DD}}(\beta) = \frac{1}{\ee \ln 2}
                \min \left\{ \frac{\beta}{1-\beta}, 1 \right\}
            \approx 0.53 \min \left\{ \frac{\beta}{1-\beta}, 1 \right\} ,
  \end{equation}
together with the Malytuov--Seb\H o result that $R = 1$ can be achieved in
the fixed-$k$ regime.
    
Baldassini, Johnson and Aldridge \cite{BJA} also showed that for adaptive
group testing, the generalized binary splitting algorithm of Hwang \cite{hwang}
gives a rate of $1$ (the best possible) for all $\beta \in (0,1]$.

From Theorem \ref{mainthm}, we know that  using an $\epsilon$-almost $k$-separating matrix will find
the defective set with error probability at most $\epsilon$, since the sets
$\K$ without overlaps can by definition be recovered with certainty. Hence, the number of
rows of the almost separating matrix gives bounds on the rate.
Therefore, using our above results, we have the following:

\begin{figure}
\begin{center}
  \includegraphics[scale=.41]{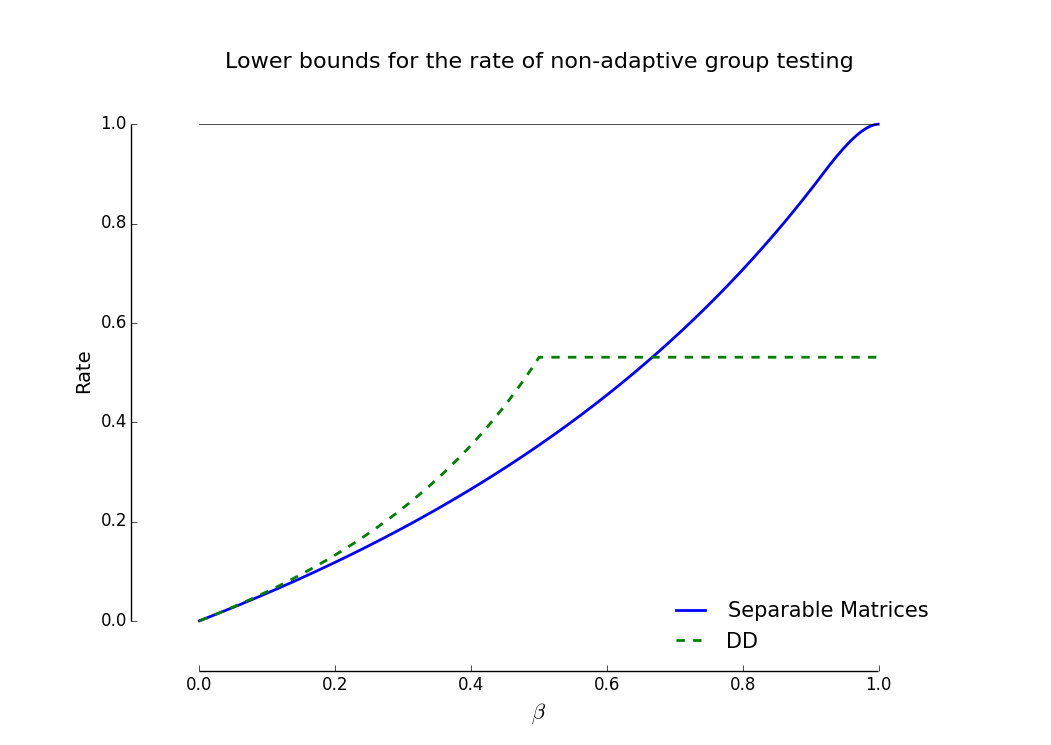}
  \caption{Bounds on rates of group testing, showing the DD bound \eqref{DD} of Baldassini et al, and our new result Theorem \ref{ratethm}.} \label{rategraph}
\end{center}
\end{figure}

\begin{theorem} \label{ratethm}
For $\beta \in (0,1]$ and $k = n^{1-\beta}$, the maximum achievable rate of nonadaptive group testing with $n$ items of which $k$ are defective is bounded below by
  \begin{equation} \label{rates}
     R \geq \frac{1}{\ln 2} \max_{\alpha \in [\ln 2, 1]} \min
         \left\{  2\alpha \ee^{-\alpha} \frac{\beta}{2 - \beta},
            -\ln\left(1 - 2\ee^{-\alpha} + 2\ee^{-2\alpha}\right) \right\} .
  \end{equation}
\end{theorem}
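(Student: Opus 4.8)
The plan is to combine the existence result of Theorem \ref{mainthm} with the observation, already recorded in this section, that an $\epsilon$-almost $k$-separable matrix used as a testing design recovers the defective set with error probability at most $\epsilon$ (the sets $\K$ without an overlap being recoverable with certainty). Fix $\beta \in (0,1)$ and $k = n^{1-\beta}$, so that $k \to \infty$ and $k = o(n)$ and Theorem \ref{mainthm} applies, and write $L$ for the right-hand side of \eqref{rates}. Given any target rate $R < L$, I would first fix $\epsilon > 0$ and choose $\delta > 0$ small enough that $L/(1+\delta) > R$; Theorem \ref{mainthm} then supplies, for all large $n$, an $\epsilon$-almost $k$-separable $m \times n$ matrix with $m = \lceil (1+\delta)M(n,k)\rceil$, which as a group-testing design has error probability at most $\epsilon$ and rate $R_n = \log_2\binom nk / m$. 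It then remains to show $R_n \to L/(1+\delta)$, so that $R_n \geq R$ for $n$ large; since $R<L$ and $\epsilon>0$ are arbitrary, this yields that the maximum achievable rate is at least $L$.

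The computation reduces to the two asymptotics of $\log_2\binom nk$ and $M(n,k)$, each to leading order on the $k\ln n$ scale. For the numerator, Stirling's formula (or \eqref{binom}) gives $\ln\binom nk = k\ln(n/k)(1+o(1)) = \beta k\ln n\,(1+o(1))$, the additive $O(k)$ correction being dominated by $k\ln(n/k) = \beta k \ln n$. For the denominator I would evaluate $M_1$ and $M_2$ separately. Using $\ln(n/k) = \beta\ln n$ gives $M_1(n,k,\alpha) = \tfrac{\beta k \ln n}{-\ln(1-2\ee^{-\alpha}+2\ee^{-2\alpha})}(1+o(1))$ directly. For $M_2$, with $nk = n^{2-\beta}$ so $\ln(nk) = (2-\beta)\ln n$, the key step is to expand the denominator as $k\to\infty$: writing $\ee^{-\alpha(1+1/k)} = \ee^{-\alpha}(1-\alpha/k+O(1/k^2))$ gives $1-2\ee^{-\alpha}+2\ee^{-\alpha(1+1/k)} = 1 - \tfrac{2\alpha \ee^{-\alpha}}{k}+O(1/k^2)$, hence $-\ln(\cdots) = \tfrac{2\alpha\ee^{-\alpha}}{k}(1+o(1))$ and $M_2(n,k,\alpha) = \tfrac{(2-\beta)k\ln n}{2\alpha\ee^{-\alpha}}(1+o(1))$.

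With these in hand, $M(n,k) = k\ln n\,(1+o(1))\min_{\alpha\in[\ln 2,1]}\max\bigl\{\beta/D_1(\alpha),\ (2-\beta)/D_2(\alpha)\bigr\}$, where $D_1(\alpha)=-\ln(1-2\ee^{-\alpha}+2\ee^{-2\alpha})$ and $D_2(\alpha)=2\alpha\ee^{-\alpha}$. Dividing, the $k\ln n$ factors cancel and $\lim_n (1+\delta)R_n = \tfrac{\beta}{\ln 2}\bigl(\min_\alpha\max\{\beta/D_1,(2-\beta)/D_2\}\bigr)^{-1}$. Finally I would use that $x\mapsto 1/x$ is decreasing to turn the reciprocal of a min-max into a max-min, namely $\bigl(\min_\alpha\max\{\beta/D_1,(2-\beta)/D_2\}\bigr)^{-1} = \max_\alpha\min\{D_1/\beta,\ D_2/(2-\beta)\}$, and pull the factor $\beta$ inside the inner minimum to recover exactly $L = \tfrac{1}{\ln 2}\max_\alpha\min\{2\alpha\ee^{-\alpha}\tfrac{\beta}{2-\beta},\,-\ln(1-2\ee^{-\alpha}+2\ee^{-2\alpha})\}$, so that $R_n \to L/(1+\delta) > R$ as required.

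The main obstacle is the honest bookkeeping of the $o(1)$ error terms: I must verify that the lower-order corrections (the additive $O(k)$ in $\ln\binom nk$, the $O(1/k^2)$ inside the $M_2$ denominator, and the $+1$ from the ceiling in $m$) are all negligible relative to the leading $k\ln n$ scale \emph{uniformly} in $\alpha\in[\ln 2,1]$, so that the limit and the $\min$/$\max$ over the compact interval may legitimately be interchanged. The boundary case $\beta = 1$ (fixed $k$) falls outside the $k\to\infty$ expansion of the $M_2$ denominator and is instead covered by the Malyutov--Seb\H o result $R=1$ cited above.
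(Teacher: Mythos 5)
Your proposal is correct and takes essentially the same route as the paper's proof: rate equals the counting bound over $(1+\delta)M(n,k)$, the identity $\ln nk = \frac{2-\beta}{\beta}\ln\frac nk$ for $k=n^{1-\beta}$, and the same $k\to\infty$ expansion $-k\ln\left(1-2\ee^{-\alpha}+2\ee^{-\alpha(1+1/k)}\right)\to 2\alpha\ee^{-\alpha}$. The only (harmless) divergence is at $\beta=1$, where the paper simply observes that the second minimand is the smaller one, while you invoke the Malyutov--Seb\H o result; both are valid.
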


Figure \ref{rategraph} illustrates the result of Theorem \ref{ratethm}.
Note that our result improves over the best known result for $\beta > 2/3$, and meets the Malyutov--Seb\H o\ point as $\beta \to 1$.

\begin{proof}
Following directly from Theorem \ref{mainthm} and the definition of rate, we have
  \begin{multline*}
     R \geq \frac{1}{\ln 2} \max_{\alpha \in [\ln 2, 1]} \min
         \Big\{ -\ln\left(1-2\ee^{-\alpha} + 2 \ee^{-\alpha(1+1/k)}\right) k \frac{\beta}{2 - \beta}, \\
            -\ln\left(1 - 2\ee^{-\alpha} + 2\ee^{-2\alpha}\right) \Big\} ,
  \end{multline*}
noting that, when $k = n^{1- \beta}$,
  \[ k \log_2 nk = \frac{2-\beta}{\beta} k \log_2 \frac nk . 
  \]
  
When $\beta = 1$, the second term is the minimum. When $\beta < 1$, since we have that $k \to \infty$, we can take
limits in the first minimand. We have
  \begin{align*}
    -\ln \big( 1-2\ee^{-\alpha} &+ 2 \ee^{-\alpha(1+1/k)}\big) k \\
      &\quad = -\ln\left( 1-2\ee^{-\alpha} + 2\ee^{-\alpha}\ee^{-\alpha/k} \right) k \\
      &\quad = -\ln\left( 1-2\ee^{-\alpha} + 2\ee^{-\alpha} \left(1 - \frac\alpha k + o\left(\frac1k\right)\right) \right) k \\
      &\quad = -\ln\left( 1 - 2\ee^{-\alpha} \frac\alpha k + o\left(\frac1k\right) \right) k \\
      &\quad = \left( 2\ee^{-\alpha} \frac\alpha k + o\left(\frac1k\right) \right) k \\
      &\quad \to 2\alpha \ee^{-\alpha} .
  \end{align*}
The result follows. \qed
\end{proof}

\begin{figure}
\begin{center}
  \includegraphics[width=0.95\textwidth]{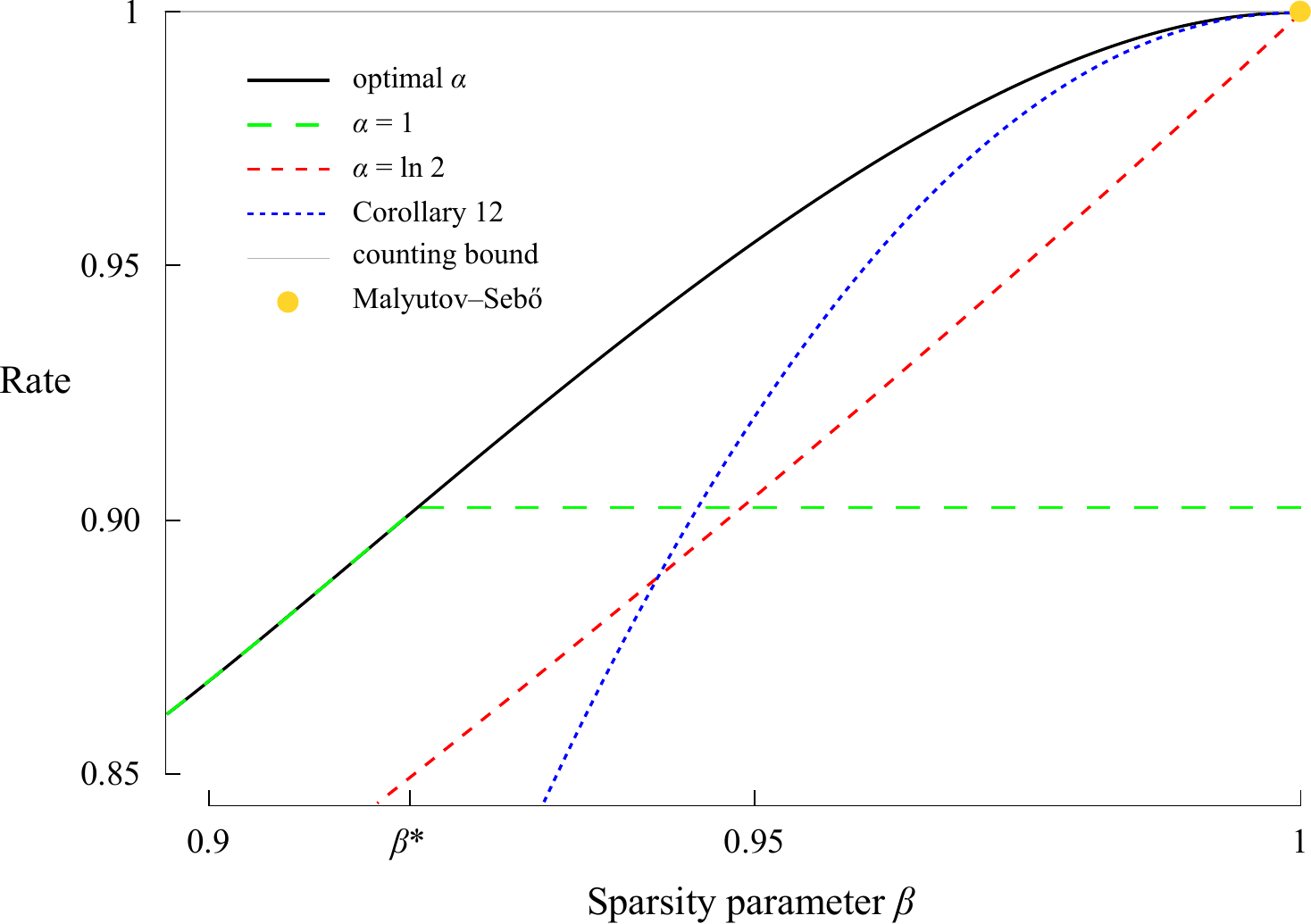}
  \caption{Bounds on rates of group testing for large $\beta$, showing Theorem \ref{ratethm} for different values of $\alpha$ and the approximation of Corollary \ref{cor:beta-1}.} \label{graph2}
\end{center}
\end{figure}

Note that our `simpler' result with $\alpha = \ln 2$ gives a bound
almost as good the general case, namely
  \begin{equation*} \label{a2} R(\ln 2) = \frac{\beta}{2 - \beta} .\end{equation*}
In particular, this choice of $\alpha = \ln 2$ is optimal at $\beta = 1$.

Note also that for all but the sparsest cases, we get the bound by taking 
$\alpha = 1$. Specifically, for $\beta \leq \beta_0$, where
  \[ \beta_0 =\frac{-2 \ln(1- 2\ee^{-1} + 2\ee^{-2})}{2\ee^{-1}-\ln(1- 2\ee^{-1} + 2\ee^{-2})} \approx 0.92, \]
the best value of the bound is
  \begin{align*} R(1) &= \frac{1}{\ln 2} \min
                \left\{ 2\ee^{-1} \frac{\beta}{2 - \beta},
                 -\ln\left(1 - 2\ee^{-1} + 2\ee^{-2}\right) \right\} \label{a1} \\
          &= \frac{1}{\ln 2}  2\ee^{-1} \frac{\beta}{2 - \beta} \\
          &\approx 1.06     \frac{\beta}{2 - \beta}. \end{align*}
       
For $\beta \in (\beta_0, 1)$, the optimal rate is given as the maximum in \eqref{rates}, 
and the optimal $\alpha$ is that which achieves the maximum. 
It's easy to see for $\beta \geq \beta_0$ that the maximum over $\alpha$ is achieved when the two terms in the minimum
are equal, and this is simple to solve numerically. 
However, here we also provide some closed form approximations to this which could be useful. 

\begin{corollary}\label{cor:beta-1}
For $\frac{2\ln 2}{1+\ln 2}< \beta < 1$ and $k = n^{1-\beta}$, the maximum achievable rate of nonadaptive group testing with $n$ items, of which $k$ are defective, is bounded from below by
\[
R \geq 1 - \frac{1}{\ln 2}\ln\left(1+ \left(\frac{2(1-\beta) \ln 2}{\beta(1-\ln 2)} \right)^2 \right)
\]
\end{corollary}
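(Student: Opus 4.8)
The plan is to derive the bound directly from the max--min of Theorem \ref{ratethm} by substituting one well-chosen value of $\alpha$, selected so that the two minimands balance, and then bounding each minimand below by a clean closed form. Throughout I would write $A = \ln 2$ and substitute $\alpha = A + s$ with $s \ge 0$, so that $\ee^{-\alpha} = \tfrac12 \ee^{-s}$, and denote the two minimands by $\phi_1(\alpha) = 2\alpha\ee^{-\alpha}\tfrac{\beta}{2-\beta}$ and $\phi_2(\alpha) = -\ln(1 - 2\ee^{-\alpha} + 2\ee^{-2\alpha})$.

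First I would rewrite $\phi_2$ exactly. A short calculation gives
\[ 1 - 2\ee^{-\alpha} + 2\ee^{-2\alpha} = 1 - \ee^{-s} + \tfrac12 \ee^{-2s} = \tfrac12\bigl(1 + (1-\ee^{-s})^2\bigr), \]
so that $\phi_2(A+s) = \ln 2 - \ln\bigl(1 + (1-\ee^{-s})^2\bigr)$. Since $0 \le 1 - \ee^{-s} \le s$ for $s \ge 0$, this yields the clean lower bound
\[ \phi_2(A+s) \ge \ln 2 - \ln(1 + s^2), \]
valid for every $s \ge 0$. This is the source of the $1 + (\cdots)^2$ in the statement, and it already controls the second minimand for free.

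Next I would locate the balancing value of $s$. The factor $2\alpha\ee^{-\alpha}$ equals $\ln 2$ at $\alpha = \ln 2$ and, since its second derivative $2\ee^{-\alpha}(\alpha-2)$ is negative on $[\ln 2,1]$, it is concave there and hence dominated by its tangent line at $\alpha = \ln 2$, whose slope is $2\ee^{-\alpha}(1-\alpha)\big|_{\alpha=\ln 2} = 1-A$. Thus $\phi_1(A+s) \le \tfrac{\beta}{2-\beta}\bigl(A + (1-A)s\bigr)$, and setting this tangent value equal to $\ln 2$ and solving for $s$ gives exactly $s = x := \frac{2(1-\beta)\ln 2}{\beta(1-\ln 2)}$. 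One checks that $x \in (0,1)$ precisely for $\beta \in \bigl(\tfrac{2\ln 2}{1+\ln 2}, 1\bigr)$, the stated range, with $x = 1$ and hence the trivial bound $R \ge 0$ at the left endpoint. I would then take $\alpha = \ln 2 + \min\{x,\,1-\ln 2\}$, so that $\alpha \in [\ln 2,1]$; since the chosen shift $s = \min\{x,1-\ln 2\}$ satisfies $1-\ee^{-s}\le s \le x$, the displayed bound on $\phi_2$ gives $\phi_2(\alpha) \ge \ln 2 - \ln(1+x^2)$ at this $\alpha$.

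It remains to check that $\phi_1(\alpha) \ge \ln 2 - \ln(1+x^2)$ as well, and this is the main obstacle. The tangent-line argument only shows $\phi_1(\ln 2 + x) \le \ln 2$, so one must quantify the concavity gap. Using the defining relation $\tfrac{\beta}{2-\beta} = \frac{\ln 2}{\ln 2 + (1-\ln 2)x}$ to write $\phi_1(\ln 2 + x) = \frac{\ln 2\,(\ln 2 + x)\ee^{-x}}{\ln 2 + (1-\ln 2)x}$, the requirement becomes the one-variable inequality
\[ \ln 2 - \frac{\ln 2\,(\ln 2 + x)\ee^{-x}}{\ln 2 + (1-\ln 2)x} \le \ln(1 + x^2), \qquad x \in (0,\,1-\ln 2]. \]
Both sides vanish at $x=0$ to second order, the left-hand side with quadratic coefficient $1 - \tfrac{\ln 2}{2} \approx 0.65$ against $1$ on the right, so the inequality holds near $0$ with a definite margin; I expect to close it on the whole interval by showing the difference of the two sides is monotone (or by a crude Taylor remainder estimate). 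Finally, on the sparser part of the range where $\alpha$ is capped at $1$, the first minimand takes the fixed value $2\ee^{-1}\tfrac{\beta}{2-\beta}$, which exceeds $\ln 2 - \ln(1+x^2)$ comfortably there — indeed the true rate $R(1)$ already does — so the claimed bound holds with room to spare. Combining the two minimand bounds and dividing by $\ln 2$ gives $R \ge 1 - \tfrac{1}{\ln 2}\ln(1 + x^2)$, as required.
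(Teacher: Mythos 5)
Your strategy is in essence the paper's own: substitute the balancing value of $\alpha$ into Theorem \ref{ratethm} (your $x$ is numerically exactly the paper's choice, which it writes as $y = \frac{\ln 2}{1-\ln 2}\cdot\frac{t}{1-t}$ with $t = 1-\frac{\beta}{2-\beta}$), note that the second minimand is then at least $\ln 2 - \ln(1+x^2)$, and show that the first minimand dominates. The parametrizations differ slightly: the paper sets $1-2\ee^{-\alpha} = x$, so that its second minimand equals $\ln 2 - \ln(1+x^2)$ exactly, while you set $\alpha = \ln 2 + x$ and concede a little slack; you also cap $\alpha$ at $1$, a point the paper glosses over (its choice can correspond to $\alpha > 1$, which is harmless because both minimands decrease as $\alpha$ increases past $1$, so the resulting bound is still dominated by the value at $\alpha = 1$). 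None of this is the problem.

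The problem is that the step you yourself flag as ``the main obstacle'' is never carried out, in either branch. For $x \le 1-\ln 2$ you reduce the claim to the one-variable inequality
\[
\ln 2 - \frac{\ln 2\,(\ln 2+x)\,\ee^{-x}}{\ln 2+(1-\ln 2)x} \le \ln(1+x^2),
\qquad x \in (0,\,1-\ln 2],
\]
verify the second-order Taylor coefficients at $x=0$ (correctly: $1-\tfrac{\ln 2}{2}$ versus $1$), and then say you ``expect'' to close the whole interval by monotonicity or a remainder estimate; that is a plan, not a proof, and local behaviour at $x=0$ by itself gives nothing on the rest of the interval. For $x > 1-\ln 2$ you assert, with no argument, that $2\ee^{-1}\frac{\beta}{2-\beta} \ge \ln 2 - \ln(1+x^2)$ holds ``with room to spare.'' Both claims are in fact true (numerically with roughly a factor-two margin), so your route could be completed, but these verifications are precisely where the content of the corollary lies: the paper's proof is devoted almost entirely to its analogue, namely showing $y+(1-y)\ln(1-y) \le \bigl(1+\frac{y(1-\ln 2)}{\ln 2}\bigr)\ln(1+y^2)$ for all $y \in [0,1]$, which it establishes carefully by splitting into $y \le 1/2$ (via $\ln(1-y) < -y$ and $\ln(1+y^2) \ge y^2(1-y/4)$) and $y \ge 1/2$ (via concavity of an explicit function together with endpoint checks). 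Until you supply arguments of comparable rigour for your two inequalities, the proof is incomplete.
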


This is illustrated in Figure \ref{graph2}. From this, we see that the bound of Corollary \ref{cor:beta-1} is very good for $\beta \approx 1$, but that when $\beta$ is not much above $\beta_0$, then the bound of simply $\alpha = 1$ is better.
Hence, setting
  \[ \beta_1 = \frac{2 \ln 2}{1 - 2\ee^{-1} + \ln 2 + 2\ee^{-1} \ln 2} \approx 0.94, \]
and taking $\beta_0$ as above, we get the following bound:


\begin{corollary}\label{cor:beta-cases}
For $\beta \in (0,1)$ and $k = n^{1-\beta}$, the maximum achievable rate of nonadaptive group testing with $n$ items, of which $k$ are defective, is bounded from below by
\[
R \geq 
\frac{1}{\ln 2}
\begin{cases}
\displaystyle 2\ee^{-1}\frac{\beta}{2-\beta}	&\text{if } \beta \leq \beta_0 \\
-\ln(1 - 2\ee^{-1} + 2\ee^{-2})	&\text{if } \beta_0 < \beta \leq \beta_1, \\
\displaystyle \ln 2 - \ln\left(1+ \left(\frac{2(1-\beta) \ln 2}{\beta(1-\ln 2)} \right)^2 \right)	&\text{if } \beta > \beta_1
\end{cases}
\]
\end{corollary}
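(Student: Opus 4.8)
The plan is to assemble the three cases by taking, for each $\beta$, the larger of two closed-form lower bounds we already have in hand: the bound of Theorem \ref{ratethm} specialised to the single value $\alpha = 1$, and the approximate-optimal bound of Corollary \ref{cor:beta-1}. Since every fixed choice of $\alpha \in [\ln 2, 1]$ in \eqref{rates} yields a valid lower bound on the achievable rate, each of the three displayed expressions is automatically a legitimate bound in its range; the only real work is to identify $\beta_0$ and $\beta_1$ as the precise points at which the dominant bound switches, so that the piecewise function stitches together continuously and reports the best available bound throughout.

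First I would record the $\alpha = 1$ specialisation of \eqref{rates}, namely
\[ R(1) = \frac{1}{\ln 2}\min\left\{ 2\ee^{-1}\frac{\beta}{2-\beta},\ -\ln\left(1-2\ee^{-1}+2\ee^{-2}\right)\right\}. \]
The first minimand is strictly increasing in $\beta$ (since $\beta/(2-\beta)$ is), while the second is constant in $\beta$, so the two cross at a unique value; solving $2\ee^{-1}\beta/(2-\beta) = -\ln(1-2\ee^{-1}+2\ee^{-2})$ for $\beta$ gives exactly the stated $\beta_0$. Hence $R(1)$ equals its first minimand for $\beta \le \beta_0$ and its constant second minimand for $\beta > \beta_0$, and this supplies the first two cases of the claimed bound directly.

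Next I would bring in Corollary \ref{cor:beta-1}, whose bound $1 - \tfrac{1}{\ln 2}\ln(1+X^2)$, with $X = 2(1-\beta)\ln 2/(\beta(1-\ln 2))$, is valid for $\beta > 2\ln 2/(1+\ln 2) \approx 0.82$, is strictly increasing in $\beta$, and tends to $1$ as $\beta \to 1$. Setting it equal to the constant second minimand of $R(1)$ and exponentiating reduces the equation to the identity $X^2 = (1-2\ee^{-1})^2$; since $2\ee^{-1} = 2/\ee < 1$, taking the positive root $X = 1-2\ee^{-1}$ and solving for $\beta$ yields exactly $\beta_1$. By the monotonicity of the Corollary \ref{cor:beta-1} bound, it therefore lies below the constant for $\beta \in (\beta_0,\beta_1)$ and above it for $\beta > \beta_1$, so the pointwise maximum of the two bounds is precisely the three-case expression. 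I would close by checking the ordering $2\ln 2/(1+\ln 2) < \beta_0 < \beta_1 < 1$, which confirms that the Corollary \ref{cor:beta-1} bound is genuinely available throughout the final range $\beta > \beta_1$ where it is used, so the cases fit together without gaps. There is no deep obstacle here; the only point requiring care is verifying the two threshold identities and this ordering, after which the corollary follows purely by comparison of the two monotone bounds.
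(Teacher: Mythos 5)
Your proposal is correct and follows essentially the same route as the paper's own proof: specialise Theorem \ref{ratethm} to $\alpha = 1$ for the first two cases (with $\beta_0$ as the crossing point of the two minimands), and invoke Corollary \ref{cor:beta-1} for $\beta > \beta_1$ after checking $\beta_1 > \frac{2\ln 2}{1+\ln 2}$. Your additional verification that $\beta_1$ is exactly where the Corollary \ref{cor:beta-1} bound crosses the constant (via the identity $X^2 = (1-2\ee^{-1})^2$) is a correct and slightly more explicit justification of the threshold than the paper gives.
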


The proofs of these
statements can be found in Appendix B.


\section{Conclusions and further work}
We have explored the asymptotics of almost separability and we have shown that 
almost separable matrices exist with $O(k \log n)$ rows. Furthermore, we have proved that the use 
of almost separable matrice can improve the lower bounds on the rate of nonadaptive
group testing in the very sparse regime.

Several interesting questions, however, remain still open, and provide scope for 
future research. Most notably, while we have given new achievable rates, the maximum rate
of nonadative group testing is still unknown. In particular, we know of no upper bounds
beyond the trivial counting bound.

As discussed in Section 2, Chen and Hwang \cite{chen} have proved that 
disjunct and separable matrices share the same asymptotics by showing how to 
construct a $k$-disjunct matrix out of a $2k$-separable matrices by adding at most one 
row to it. Unlike its inverse (disjunctness implying separability),
this statement doesn't naturally carry through to the case of almost 
separability/disjunctness.

Another problem is to extend the existing results to other
regimes than the $k = n^{1-\beta}$ for $\beta \in (0,1]$ considered here.
Of particular interest is the case where $k = cn$ grows like a constant proportion of $n$,
as in recent work by Wadayama \cite{wadayama}. Note that the counting bound
now gives a lower bound of $m = O(n)$, while, for coupon-collector reasons,
the IID random approach here inevitably leads to the suboptimal $m = \Omega(n \log n)$.

\appendix



\section{Asymptotic analysis of the overlap probability}


We now show the full result of Lemma \ref{hardlem}.

We use the same random construction as the special case described in Section \ref{secproof}, 
but now take $p = 1 - \ee^{-\alpha/k}$,
so $q = \ee^{-\alpha/k}$, where
$\alpha$ is a parameter to be chosen later (simply taking $\alpha = \ln 2$ as in Section 
\ref{secproof} gives
$p = 1 -2^{-1/k}$). Within the group testing literature, different values of $p$ 
have also been considered. For example, the value $p = 1/k$ 
(which gives an average of one defective per test) has been considered before by many authors 
\cite{ABJ,chan,atia,zhigljavsky}, while Sejdinovic and Johnson \cite{sejd} 
consider the more general
$\alpha/k$ for noisy group testing. The same value can be obtained asymptotically 
in this context, as $p \sim \alpha/k$ if $k \to \infty$ as $n \to \infty$. 

\begin{proof}[Proof of Lemma \ref{hardlem}]
We wish to find values of $m$ such that $\Pr(\text{overlap})$ can be made
arbitrarily small. It will be convenient to write 
\[
s = 1-2q^k = 1-2\ee^{-\alpha}, \qquad 
t = 2q^{2k} = 2\ee^{-2\alpha}, \qquad 
u = \frac{1}{q}=\ee^{\alpha/k} ,
\]
allowing us to rewrite the bound \eqref{overlap} as
    \begin{equation*} 
    \Pr(\text{overlap}) \leq \sum_{b=0}^{k-1} \binom{k}{b}\binom{n-k}{k-b}
        \left(s +tu^b\right)^m .
  \end{equation*}
%
%
%
%
%
As before, it will be more convenient to deal with $c = b-k$, which gives
    \begin{equation} \label{overlap2} 
    \Pr(\text{overlap}) \leq \sum_{c=1}^{k} \binom{k}{c}\binom{n-k}{c}
        \left(s +tu^{k-c}\right)^m .
  \end{equation}
  
Now, we expand out $(s+tu^b)^m$ in \eqref{overlap2} using the binomial
theorem and reverse the order of summation to get
\begin{align}
  \Pr(\text{overlap})
  &\leq \sum_{c=1}^{k} \binom{k}{c} \binom{n-k}{c} \sum_{j=0}^m \binom{m}{j} s^{m-j} t^j u^{(k-c)j} \notag\\
  &= \sum_{j=0}^m \binom{m}{j} s^{m-j} t^j \sum_{c=1}^{k} \binom{k}{c} \binom{n-k}{c} u^{(k-c)j} \notag\\
  &= \sum_{j=0}^m \binom{m}{j} s^{m-j} t^j u^{jk} \sum_{c=1}^{k} \binom{k}{c} \binom{n-k}{c} q^{cj} 
  \label{eq:main}
\end{align}

Consider the inner sum of \eqref{eq:main}.
It is possible to approximate it by its largest term, which will depend
on the value of $j$.  To start with, the following bound holds:
\begin{equation} \label{above}
 \binom{k}{c} \binom{n-k}{c} q^{cj} \leq \left(\frac{\ee^2 k n q^j}{c^2} \right)^c.
\end{equation}
Note that for any $a$, the function $(a/x^2)^x$ attains its maximum at $x = \sqrt{a}/\ee$; 
and further is increasing for $x < \sqrt{a}/\ee$ and decreasing for $x> \sqrt{a}/\ee$.  
In \eqref{above}, the maximum corresponds to $c = \sqrt{k n q^j}$.  
Now, $1 < \sqrt{k n q^j} < k$ when
\[
  \frac{1}{-\ln q} \ln \frac nk < j
    < -\frac{1}{-\ln q} \ln nk ,
\]
or, since $q = \ee^{-\alpha/k}$,
\[
  \frac{1}{\alpha}k \ln \frac nk < j
    < \frac{1}{\alpha} k \ln nk .
\]
Then, in light of the above,
we will split between the three cases: 
first, $j \leq k/\alpha \ln n/k$; second, $k/\alpha \ln n/k < j < k/\alpha \ln nk$; 
and third, $j \geq k/\alpha \ln nk$.

For the first case, $j \leq k/\alpha \ln n/k$, the maximum of \eqref{above} is attained at $c=k$, giving the bound
\[
 \left( \frac{\ee^2 k n q^j}{k^2}\right)^{k} = \ee^{2k} \left(\frac nk \right)^{k} q^{jk} .
\]
Summing over this range for $j$ yields
\begin{align*}
 \sum_{j=0}^{k/\alpha \ln n/k} \binom{m}{j} s^{m-j} t^j u^{jk} k\ee^{2k} \left(\frac nk \right)^{k} q^{jk} 
  &= k\ee^{2k} \left(\frac nk \right)^{k}
          \sum_{j=0}^{k/\alpha \ln n/k} \binom{m}{j}s^{m-j} t^j \\
  &\leq k\ee^{2k} \left(\frac nk \right)^{k}
          \sum_{j=0}^{m} \binom{m}{j}s^{m-j} t^j \\
  &= k\ee^{2k} \left(\frac nk \right)^{k} (s+t)^m \\
  &= k\exp\left(2k + k\ln \frac nk + m\log(s+t) \right).
\end{align*}
Provided that
  \begin{align}
    m &> (1+\delta)\frac{1}{-\ln(s+t)} k \ln \frac nk \notag \\
      &= (1+\delta)\frac{1}{-\ln(1-2\ee^{-\alpha} + 2\ee^{-2\alpha})} k \ln \frac nk \notag \\
      &= (1+\delta) M_1(n,k,\alpha), \label{cond2}
  \end{align}
for some $\delta > 0$, then this can be made arbitrarily small for $n$ sufficiently large.

For the second case, $k/\alpha \ln n/k < j < k/\alpha \ln nk$,
the maximum is attained at $c = \sqrt{k n q^j}$, giving the bound
\begin{multline*}
\left(\frac{\ee^2 k n q^j}{k n q^j} \right)^{\sqrt{k n q^j}} = \exp(2 \sqrt{k n q^j}) 
   \leq \exp(2 \sqrt{k n q^{k/\alpha \ln n/k}}) \\ 
   = \exp \left( 2 \sqrt{k n \left(\frac nk\right)^{k/\alpha \ln q}} \right)
   = \exp\left(2 \sqrt{kn \frac kn}\right) = \exp(2k).
\end{multline*}
Then we have that
\begin{align*}
\sum_{j = k/\alpha \ln n/k}^{k/\alpha\ln nk} \binom{m}{j} s^{m-j} t^j u^{jk}  
\sum_{c=1}^{k} &\binom{k}{c} \binom{n-k}{c} q^{jc}\\
	&\leq \sum_{j = k/\alpha \ln n/k}^{k/\alpha\ln nk} \binom{m}{j} s^{m-j} (t u^k)^j k\ee^{2k}
       \\
	&= k\ee^{2k} \mathbb{P}\left(\frac{1}{\alpha}k \ln \frac nk < X \leq \frac{1}{\alpha} k \ln nk\right) ,
\end{align*}
where we called $X \sim \operatorname{Bin}(m, tu^k)$, and we have used that $s = 1 - tu^k$. 
%
%
Then as long as 
  \begin{equation}  \label{expect}
  \mathbb EX = mtu^k > (1+\delta)\frac{1}{\alpha} k \ln nk,
  \end{equation}
we have by the Azuma--Hoeffding inequality that
  \begin{align*}
     k\ee^{2k} \mathbb{P}\left(\frac 1\alpha k \ln \frac nk < X \leq \frac1\alpha k \ln nk \right)
       &\leq k\ee^{2k} \mathbb{P}\left(X \leq \frac1\alpha \ln nk  \right)\\
       &\leq k\ee^{2k} \exp \left( - \frac 2m \left(mtu^k -  \frac1\alpha k\ln nk \right)^2 \right) \\
       &= k\exp \left(2k - 2m \left(tu^k- \frac{k \ln nk}{\alpha m} \right)^2 \right) .
  \end{align*}
Given \eqref{expect}, this can be made arbitrarily small for $n$ sufficiently large. We can rewrite \eqref{expect} as
  \begin{equation} \label{cond3}
  m > (1+\delta) \frac{1}{\alpha tu^k} k \ln nk
    = (1+\delta) \frac{\ee^\alpha}{2\alpha} k \ln nk = (1+\delta) M_2(n,k,\alpha) .
  \end{equation}

Now for the final case, when $j \geq k/\alpha \ln nk$. 
Note that for $j \geq k/\alpha \ln nk$, 
\[
q^j \leq q^{k/\alpha\ln nk} = \ee^{- \ln nk} = \frac{1}{nk},
\]
hence $nkq^j\leq1$. 
Then, splitting up $c =1$, $c = 2$, and $c \geq 3$, and noting that $\ee^2/9 < 1$, we have
\begin{align*}
\sum_{c=1}^{k} \left(\frac{\ee^2 kn q^j}{c^2} \right)^c
    &\leq \ee^2 kn q^j \left(1 + \frac{\ee^2 kn q^j}{2^4} + \sum_{c=3}^{k} \frac{1}{c^2} \left(\frac{\ee^2 kn q^j}{c^2}\right)^{c-1} \right)\\
    &\leq \ee^2 kn q^j \left( 1 + \frac{\ee^2}{16} + \frac{1}{9}\sum_{c=3}^{\infty}\left(\frac{\ee^2}{9} \right)^{c-1}\right)\\
    &\leq 5 \ee^2 kn q^j.
\end{align*}
Thus,
\begin{align*}
\sum_{j=\alpha\ln nk}^m \binom{m}{j} s^{m-j} t^j u^{jk} \sum_{c=1}^{k} \left(\frac{\ee^2 k n q^j}{c^2} \right)^c
	&\leq \sum_{j=\alpha\ln nk}^m \binom{m}{j} s^{m-j}t^j u^{jk} 5 \ee^2 kn q^j \\
    &\leq 5 \ee^2 kn \sum_{j=0}^m \binom{m}{j} s^{m-j} (tu^{k-1})^j \\
    &= 5 \ee^2 kn (s+tu^{k-1})^m \\
    &= 5 \exp \left( \ln nk + m \ln (s+tu^{k-1})\right)
\end{align*}
To make this small requires
  \begin{equation} \label{cond4a}
    m > (1+\delta) \frac{1}{-\ln(s + tu^{k-1})} \ln nk .
  \end{equation}

In order to compare the condition in \eqref{cond4a} to \eqref{cond2} and \eqref{cond3}, note that for any $x, y \in (0,1)$,
\[
 - \ln(1-x(1-\ee^{-y})) \leq xy.
\]
The above inequality can be seen, for example, since for each $y$, the function $f_y(x) = xy+\ln(1-x(1-\ee^{-y}))$ is concave for $x \in [0,1]$ with $f_y(0) = 0 = f_y(1)$.  Thus, since $s+tu^{k-1} = 1 - 2\ee^{-\alpha}(1-\ee^{-\alpha/k})$, then
\[
 \frac{-1}{\ln(s+tu^{k-1})} = \frac{-1}{\ln(1-2\ee^{-\alpha}(1-\ee^{-\alpha/k}))} \geq \frac{k}{2 \ee^{-\alpha} \alpha}.
\]
Thus, condition \eqref{cond4a} is always stronger than \eqref{cond2} and one can see that when $k$ tends to infinity, the two conditions are asymptotically equal.
 
Hence from \eqref{cond2}, \eqref{cond3}, and \eqref{cond4a} 
our requirements are
  \[
    m > (1+\delta) M_1(n,k,\alpha) \qquad
    m > (1+\delta) M_2(n,k,\alpha) .
  \]
From the above, we can optimise this result over $\alpha$. Noting that $M_1$ is minimised at $\alpha = \ln 2$ and $M_2$ is minimised at $\alpha = 1$, it is sufficient to just consider $\alpha \in [\ln 2, 1]$.

This proves Lemma \ref{hardlem}. \qed
\end{proof}


%
%


\section{Explicit bounds on rate}

Here we give the proofs of Corollaries \ref{cor:beta-1} and \ref{cor:beta-cases}.

\begin{proof}[Proof of Corollary \ref{cor:beta-1}]
The bound on $R$ follows from Theorem \ref{ratethm} by a careful choice of $\alpha$ in terms of $\beta$.

In order to simplify some of the expressions that follow, define $y = y(\alpha) = 1-2\ee^{-\alpha}$ and $t = 1 - \frac{\beta}{2-\beta}$.  Then, for $\alpha \in [\ln 2, 1]$ we have $y \in [0,1 - 2/\ee]$ and as $\beta$ tends to $1$, $t$ tends to $0$.  Further, the expressions in Theorem \ref{ratethm} can be simplified as
\[
 - \ln(1-2\ee^{-\alpha} + 2\ee^{-2\alpha}) = -\ln\left(\frac{1}{2}(1+y^2)\right) = \ln 2 - \ln(1+y^2)
 \]
 and
\begin{equation*}
 2\alpha \ee^{-\alpha} \frac{\beta}{2 - \beta}  = (1-y)\left(-\ln\left(\frac{(1-y)}{2} \right) \right)(1-t)
 		 = (1-y)\left(\ln 2 - \ln(1-y) \right)(1-t).
\end{equation*}

Thus, the result of Theorem \ref{ratethm} can be restated as
\begin{equation}\label{eq:restated-R}
R \geq \frac{1}{\ln 2} \min_{y \in [0,1-2/\ee]}\left\{\ln 2 - \ln(1+y^2), (1-t)(1-y)\left(\ln 2 - \ln(1-y)\right) \right\}
\end{equation}

The desired result then follows from equation \eqref{eq:restated-R} by choosing
\begin{equation}\label{eq:y-def}
y = \frac{\ln 2}{1- \ln 2} \cdot \frac{t}{1-t}.
\end{equation}
Note that, by the definition of $t$, $\frac{t}{1-t} = \frac{2(1-\beta)}{\beta}$.

What remains is to show that for $y$ given by equation \eqref{eq:y-def},
\begin{equation}\label{eq:y-bound-claim}
\ln 2 - \ln(1+y^2) \leq (1-y)(1-t)\left(\ln 2 - \ln(1-y) \right).
\end{equation}

For $y$ given by equation \eqref{eq:y-def}, the right-hand side of equation \eqref{eq:y-bound-claim} is
\begin{align*}
(1-y)&(1-t)\left(\ln 2 - \ln(1-y) \right)\\
    & = (1-y)(1-t)(\ln 2 + y) - (1-y)(1-t)(y+\ln(1-y))\\
    & = (1-t)\ln 2 + y(1-t)(1-\ln2)  - (1-t)y^2\\
    & \qquad - (1-y)(1-t)(y+\ln(1-y))\\
    & = (1-t)\ln 2 + t \ln 2 - (1-t)y^2\\
    & \qquad - (1-y)(1-t)(y+\ln(1-y)) &&\text{(by eq. \eqref{eq:y-def})}\\
    & = \ln 2 - (1-t)(y^2 + (1-y)y + (1-y)\ln(1-y))\\
    & = \ln 2 - (1-t)(y+ (1-y)\ln(1-y))\\
    & = \ln 2 - \left(\frac{\ln 2}{\ln 2 + y(1-\ln 2)}\right)(y+ (1-y)\ln(1-y)) &&\text{(by eq. \eqref{eq:y-def})}.
\end{align*}

Thus, in order to show that the inequality in \eqref{eq:y-bound-claim} holds, it suffices to show that for all $y \in [0,1]$,
\begin{equation}\label{eq:y-bound-claim2}
y+(1-y)\ln(1-y) \leq \left(1+ \frac{y(1-\ln2)}{\ln 2} \right)\ln(1+y^2)
\end{equation}

The inequality in \eqref{eq:y-bound-claim2} is shown by considering separately the cases $y \leq 1/2$ and $y > 1/2$.

Consider first the case $y \leq 1/2$.  Using the fact that $\ln(1-y) < -y$ and 
\[
\ln(1+y^2) \geq y^2 - y^4/2 \geq y^2 - y^3/4 = y^2(1-y/4).
\]
Then,
\[
y + (1-y)\ln(1-y) < y^2
\]
and for all $y \in [0,1/2]$,
\[
1 \leq \left(1+y\frac{(1-\ln 2)}{\ln 2} \right)\left(1 - \frac{y}{4}\right).
\]
Thus, for $y \leq 1/2$,
\begin{multline*}
y+(1-y)\ln(1-y) \leq y^2 \leq y^2  \left(1+y\frac{(1-\ln 2)}{\ln 2} \right)\left(1 - \frac{y}{4}\right)\\ \leq  \left(1+y\frac{(1-\ln 2)}{\ln 2} \right)\ln(1+y^2).
\end{multline*}

Consider now the inequality from \eqref{eq:y-bound-claim2} in the case $y \geq 1/2$.  Note that for all $y \in [0,1]$,
\[
\ln(1+y^2) \geq \ln 2 - (1-y).
\]
The above inequality can be seen to be true since it holds for $y = 0$ and $y = 1$ and $\ln(1+y^2)$ is concave.  Thus, in order to prove the inequality in \eqref{eq:y-bound-claim2}, it suffices to show that for $y \in [1/2, 1]$,
\begin{equation}\label{eq:big-y}
y+(1-y)\ln(1-y) \leq\left(1+y\frac{(1-\ln2)}{\ln 2} \right) (\ln 2 - (1-y)).
\end{equation}
Again, the inequality in equation \eqref{eq:big-y} can be seen to be true since it holds for $y = 1/2$ and $y = 1$ and the function 
\begin{align*}
\bigg(1+ &y\frac{(1-\ln2)}{\ln 2} \bigg)(\ln 2 - (1-y)) - y - (1-y)\ln(1-y)\\
	& = \ln 2 + y(1-\ln 2) - 1 + y - y \frac{(1-\ln 2)}{\ln 2} + y^2 \frac{(1-\ln2)}{\ln 2}\\
    & \qquad - y - (1-y)\ln (1-y)\\
    & = (\ln 2 - 1) + y(1-\ln 2) \left(1 - \frac{1}{\ln 2} \right) + y^2 \frac{(1-\ln 2)}{\ln 2} - (1-y)\ln(1-y)
\end{align*}
is concave for $y \in [0,1]$. \qed
\end{proof}

Next, is the proof of Corollary \ref{cor:beta-cases}.

\begin{proof}[Proof of Corollary \ref{cor:beta-cases}]
For $\beta < \beta_1$, the result follows from Theorem \ref{ratethm} by subsituting $\alpha = 1$ and noting that the inequality
\[
\frac{2\beta}{\ee(2-\beta)} \leq -\ln(1-2/\ee + 2/\ee^2)
\]
holds exactly when $\beta < \beta_0$.

For $\beta \geq \beta_1$, the result follows from Corollary \ref{cor:beta-1} by noting that $\beta_1 > \frac{2\ln 2}{1+\ln 2}$. \qed
\end{proof}

In Corollaries \ref{cor:beta-1} and \ref{cor:beta-cases}, a better bound for the case $\beta > \beta_0$ can be obtained by substituting in Theorem \ref{ratethm}, $\alpha$ chosen so that
\[
1 - 2\ee^{-\alpha} = \frac{-\beta(1-\ln 2) + \sqrt{\beta^2(1-\ln 2)^2 + 4(1-\beta)(4-3\beta)\ln 2}}{4-3\beta},
\]
but the expression obtained does not seem simpler than statement of Theorem \ref{ratethm} itself.


\begin{thebibliography}{29}

\bibitem{ABJ}
  M Aldridge, L Baldassini, and O Johnson.
  Group testing algorithms: bounds and simulations.
  \emph{IEEE Transactions on Information Theory}, \textbf{60}:6, 3671--3687, 2014.

\bibitem{atia}
  GK Atia and V Saligrama.
  Boolean compressed sensing and noisy group testing.
  \emph{IEEE Transactions on Information Theory}, \textbf{58}:3, 1880--1901, 2012.

\bibitem{BJA}
  L Baldassini, O Johnson, and M Aldridge.
  The capacity of adaptive group testing.
  \emph{2013 IEEE International Symposium on Information Theory Proceedings}, 2676--2680, 2013.
  
\bibitem{chan}
  CL Chan, S Jaggi, V Saligrama, and S Agnihotri.
  Non-adaptive group testing: explicit bounds and novel algorithms.
  \emph{IEEE Transactions on Information Theory}, \textbf{60}:5, 3019--3035, 2014
  
\bibitem{chen}
  H-B Chen and FK Hwang.
  Exploring the missing link among $d$-separable, $\overline d$-separable and $d$-disjunct matrices.
  \emph{Discrete Applied Mathematics}, \textbf{155}:5, 662–-664, 2007.

\bibitem{dorfman}
  R Dorfman.
  The detection of defective members of large populations.
  \emph{The Annals of Mathematical Statistics}, \textbf{14}:4, 436--440, 1943.

\bibitem{hwang}
  D-Z Du and FK Hwang.
  \emph{Combinatorial Group Testing and Applications}, second edition.
  Series on Applied Mathematics, \textbf{12}, World Scientific, 2000.
  
\bibitem{dyachkov}
  AG D'yachkov and VV Rykov.
  Bounds on the length of disjunctive codes. 
  \emph{Problems of Information Transmission}, \textbf{18}:3, 166–-171, 1982.
  
\bibitem{erdosmoser}
  P Erd\H{o}s and L Moser.
  Problem 35.
  \emph{Proceedings on the Conference of Combinatorial Structures and their Applications}, Gordon and Breach, 1970.

\bibitem{furedi}
  Z Füredi.
  On $r$-cover-free families. 
  \emph{Journal of Combinatorial Theory, Series A}, \textbf{73}:1, 172--173, 1996.

\bibitem{ks}
  WH Kautz and RC Singleton.
  Nonrandom binary superimposed codes.
  \emph{IEEE Transaction on Information Theory}, \textbf{10}:4, 363--377, 1964.

\bibitem{macula}
  A Macula, V Rykov and S Yekhanin.
  Trivial two-stage group testing for complexes using almost disjunct matrices.
  \emph{Discrete Applied Mathematics}, \textbf{137}:1, 97--107, 2004.

\bibitem{malyutov2}
   D Malioutov and M Malyutov.
   Boolean compressed sensing: Lp relaxation for group testing.
   \emph{2012 IEEE International Conference on Acoustics, Speech and Signal Processing (ICASSP)},
      3305–-3308, 2012.

\bibitem{malyutov}
    MB Malyutov.
  The separating property of random matrices.
  \emph{Mathematical Notes of the Academy of Sciences of the USSR}, \textbf{23}:1, 84--91, 1978.

\bibitem{malyutovsurvey}
  M Malyutov.
  Search for sparse active inputs: a review.
  In H Aydinian, F Cicalese, and C Deppe (Eds),
    \emph{Information Theory, Combinatorics and Search Theory}
    Lecture notes in Computer Science, \textbf{7777}, Springer, 609--647, 2013.

\bibitem{mazumdar}
  A Mazumdar. 
  On almost disjunct matrices for group testing.
  \emph{Algorithms and Computation}, Lecture Notes in Computer Science, \textbf{7676}, 649--658, 2012.
  
\bibitem{porat}
  E Porat and A Rothschild. 
  Explicit Non-Adaptive Combinatorial Group Testing Schemes.
  In L Aceto, I Damgard, LA Goldberg, MM Halldorsson, A Ingolfsdottir and I Walukiewicz (Eds),
  \emph{ICALP 2008}, Lecture Notes in Computer Science, \textbf{5125}, 748--759, 2008.

\bibitem{ruszinko}
  M Ruszink\'o.
  On the upper bound of the size of $r$-cover-free families.
  \emph{Journal of Combinatorial Theory, Series A}, \textbf{66}:2, 302--310, 1994.

\bibitem{sebo}
  A Seb\H o.
  On two random search problems.
  \emph{Journal of Statistical Planning and Inference}, \textbf{11}:1, 23--31, 1985.
  
\bibitem{sejd}
  D Sejdinovic and OT Johnson.
  Note on noisy group testing: asymptotic bounds and belief propagation reconstruction.
  \emph{Proceedings of the 48th Annual Allerton Conference on Communication, Control and Computing},
    998--1003, 2010.

\bibitem{wadayama}
  T Wadayama.
  An analysis on non-adaptive group testing based on sparse pooling graphs.
  \emph{2013 IEEE International Symposium on Information Theory},2681–-2685, 2013.

\bibitem{zhigljavsky}
  A Zhigljavsky.
  Probabilistic existence theorems in group testing.
  \emph{Journal of Statistical Planning and Inference},\textbf{115}:1, 1--43, 2003.

\end{thebibliography}
\end{document}